\newtheorem{theorem}{Theorem}
\title{Stochastic Time-Optimal Control Studies for Additional Food provided Prey-Predator Systems involving Holling Type-IV Functional  Response}
\author[1]{D Bhanu Prakash}
\author[2]{D K K Vamsi}
\affil[1, 2]{ \ Department of Mathematics and Computer Science, Sri Sathya Sai Institute of Higher Learning, India.}
\affil[1]{First Author. Email: dbhanuprakash@sssihl.edu.in}
\affil[2]{Corresponding author. Email: dkkvamsi@sssihl.edu.in}
\date{}
\begin{document}

\maketitle

%%%%%% Abstract %%%%%%
\begin{abstract} {{
\noindent We consider an additional food provided prey-predator model exhibiting Holling type IV functional response with combined continuous white noise and discontinuous L\'evy noise. We prove the existence and uniqueness of global positive solutions for the considered model. By considering the quality and quantity of additional food as control parameters, we formulate a time-optimal control problem. We obtain the condition for the existence of an optimal control. Furthermore, making use of the arrow condition of the sufficient stochastic maximum principle, we characterize the optimal quality of additional food and optimal quantity of additional food. Numerical results are given to illustrate the theoretical findings with applications in biological conservation and pest management.}}
\end{abstract}

{ \bf {keywords:} } Stochastic optimal control; Time-optimal control; Holling type IV response; Biological conservation; Pest management; Brownian motion; L\'evy noise

{ \bf {MSC 2020 codes:} } 37A50; 60H10; 60J65; 60J70; 60J76; 49K45; 

%\linenumbers

%%%%%% Equations %%%%%%
\section{Introduction} \label{Intro}

\qquad The complex natural ecosystems present around us kindled great attention of many ecologists and mathematicians to the mathematical modeling of ecological systems in the last few decades. The interaction among species in these ecosystems can be of several forms like competition, mutual interference, prey-predator interactions and so on. The very first ecological models are framed from the pioneering works of Alfred J. Lotka \cite{lotka1925elements} and Vito Volterra \cite{volterra1927variazioni} in 1925. Various complex models are framed and studied ever since. 

The basic component of prey-predator systems is a functional response, which is defined as the rate at which each predator captures prey \cite{kot2001elements}. A few examples of functional responses include Holling functional responses \cite{metz2014dynamics}, Beddington–DeAngelis functional responses \cite{geritz2012mechanistic}, Arditi-Ginzburg functional responses \cite{arditi1989coupling}, Hassell-Varley functional responses \cite{hassell1969new} and Crowley \& Martin functional response \cite{crowley1989functional}. In this work, we study the models exhibiting Holling type IV functional response. Some of the organisms that display Holling type IV functional response in nature are found in \cite{yano2012cooperative, mcclure2011defensive}. In this functional response, the predator's per capita rate of predation decreases at sufficiently high prey density, due to either prey interference or prey toxicity.

In recent decades, many pioneering dynamical modeling works \cite{varaprasadsiraddi, varaprasadsirtime, v1,v2,V3,ananthcmb} reveal that the provision of additional food to predators plays a vital role in controlling the dynamics of the system. Recently, the authors in \cite{v2} have studied additional food provided deterministic prey-predator systems involving Holling type IV functional response. Also, the authors in \cite{ananth2022achieving,ananthcmb} studied the optimal control problems of deterministic prey-predator systems involving Holling type IV functional response with the quality of additional food and the quantity of additional food as the control parameters respectively.

Often it is observed that the parameters in an ecosystem are effected by the environmental fluctuations \cite{may2019stability}. Therefore in recent years, many researchers have drawn their attention to stochastic models. Most stochastic prey-predator models are driven by the Brownian motion, which captures the continuous noise. Authors in \cite{Stochastic5, Stochastic4} studied the deterministic and stochastic dynamics of a modified Leslie-Gower prey-predator system with simplified Holling type IV functional response. However, the sudden changes in environment like toxic pollutants, floods, earthquakes and so on, cannot be captured by the Brownian motion as it is a continuous noise. Hence, addition of a discontinuous noise, like L\'evy noise, to the prey-predator system with Brownian motion makes the models more realistic. \cite{Poisson1} uses the stochastic averaging method to analyze the modified stochastic Lotka-Volterra models under combined Gaussian and Poisson noise. \cite{LevyJumps2} studies the dynamics and dynamics of a Stochastic One-Predator-Two-Prey time delay system with jumps.

To the best of our knowledge, there is no study of additional food provided stochastic prey-predator system with jumps. Secondly, the optimal control studies of Stochastic Differential Equations with Jumps (SDEJ) were not performed on prey-predator systems. Lastly, very few works involved Holling type-IV response which incorporates the most important group defence property. Motivated by these observations, in this work, we study the optimal control problems for additional food provided stochastic Holling type IV prey-predator systems under combined Gaussian and L\'evy noise.

The article is structured as follows: Section \ref{sec2} introduces the stochastic prey-predator model with Holling type-IV functional response and additional food with intra-specific competition among predators. The existence of global positive solution for this model is briefly discussed in Section \ref{sec3}. The time-optimal control problem is formulated and the optimal quality and quantity of additional food is characterized in Section \ref{secOC}. Section \ref{secNS} illustrates the key findings of the analysis through numerical simulations in the context of both biological conservation and pest management. Finally, Section \ref{secD} presents the discussions and conclusions.

\section{The Stochastic Model} \label{sec2}

Let $N$ and $P$ denote the biomass of prey and predator population densities respectively. In the absence of predator, the prey growth is modelled using logistic equation. Further, we assume that the prey species exhibit Holling type-IV functional response towards predators. We also assume that the predators are supplemented with an additional food of biomass A, which is uniformly distributed in the habitat. Incorporating these assumptions, the prey-predator dynamics with Holling type-IV functional response along with additional food for predators can be described as:

\begin{equation} \label{det}
\begin{split}
\frac{\mathrm{d} N(t)}{\mathrm{d} t} & = r N(t) \left( 1-\frac{N(t)}{K} \right)- \Bigg( \frac{c N(t)}{(A \eta \alpha + a)(b N^2(t) + 1) + N(t)}\Bigg) P(t) \\
\frac{\mathrm{d} P(t)}{\mathrm{d} t} & = e \Bigg( \frac{N(t) + \eta A (bN^2(t) + 1)}{(A \eta \alpha + a)(b N^2(t) + 1) + N(t)} \Bigg) P(t) - m_1 P(t)
\end{split}
\end{equation}

For a complete analysis of model (\ref{det}), the reader is referred to Vamsi et. al., \cite{V3}.

In addition, we also assume that the predators exhibit intra-specific competition. We capture this competition in similar lines with \cite{stochastic8, CompImp}. Accordingly, the system (\ref{det}) gets transformed to the following system.

\begin{equation} \label{det2}
\begin{split}
\frac{\mathrm{d} N(t)}{\mathrm{d} t} & = r N(t) \left( 1-\frac{N(t)}{K} \right)- \Bigg( \frac{c N(t)}{(A \eta \alpha + a)(b N^2(t) + 1) + N(t)}\Bigg) P(t) \\
\frac{\mathrm{d} P(t)}{\mathrm{d} t} & = e \Bigg( \frac{N(t) + \eta A (bN^2(t) + 1)}{(A \eta \alpha + a)(b N^2(t) + 1) + N(t)} \Bigg) P(t) - m_1 P(t) - \delta P(t)^2
\end{split}
\end{equation}

The biological descriptions of the various parameters involved in the systems  (\ref{det}) and  (\ref{det2}) are described in Table \ref{param_tab}.

\begin{table}[bht!]
    \centering
    \begin{tabular}{ccc}
        \hline
        Parameter & Definition & Dimension \\  
        \hline
        T & Time & time\\ 
        N & Prey density & biomass \\
        P & Predator density & biomass \\
        r & Prey intrinsic growth rate & time$^{-1}$ \\
        K & Prey carrying capacity & biomass \\
        c & Maximum rate of predation & time$^{-1}$ \\
        e & Maximum growth rate of predator & time$^{-1}$ \\
        m$_1$ & Predator mortality rate & time$^{-1}$ \\
        $\delta$ & Death rate of predators due to intra-specific competition & biomass$^{-1}$ time$^{-1}$ \\ 
        A & Quantity of additional food for predators & biomass \\
        b & Group defence in prey & biomass$^{-2}$ \\
        \hline
    \end{tabular}
    \caption{Description of variables and parameters present in the systems (\ref{det}), (\ref{det2})}
    \label{param_tab}
\end{table}

\newpage

In order to reduce the complexity of the model, we non-dimensionalize the system (\ref{det2}) using the following non-dimensional parameters. 
$$N=ax, \  P=\frac{ay}{c}, \ \gamma = \frac{K}{a}, \ \xi = \frac{\eta A}{a}, \ \omega = b a^2, \  m_2 = \frac{c}{a\delta}$$.
Accordingly, system (\ref{det2}) gets reduced to the following system.

\begin{equation} \label{det3}
\begin{split}
\frac{\mathrm{d} x}{\mathrm{d} t} & = rx \Bigg(1-\frac{x}{\gamma} \Bigg)- \Bigg( \frac{xy}{(1+\alpha \xi)(\omega x^2 + 1) + x}\Bigg) \\
\frac{\mathrm{d} y}{\mathrm{d} t} & = e \Bigg( \frac{x + \xi (\omega x^2 + 1)}{(\alpha \xi+ 1)(\omega x^2 + 1) + x} \Bigg) y - m_1 y - m_2 y^2
\end{split}
\end{equation}

In real world scenarios, environmental fluctuations affect the dynamics of the system. In order to capture these fluctuations, we introduce the multiplicative white noise terms into (\ref{det3}). As in \cite{Stochastic1,Stochastic6, stochastic8}, we now suppose that the intrinsic growth rate of prey and the death rate of predator are mainly affected by environmental noise such that 

$$r \rightarrow r + \sigma_1 dW_1(t),\  m_1 \rightarrow m_1 +\sigma_2 dW_2(t)$$

where $W_i(t) \ (i = 1, 2)$ are the mutually independent standard Brownian motions with $ W_i(0) = 0$ and  $\sigma_1$ and $\sigma_2$ are positive constants and they represent the intensities of the white noise.

Also, the system can go through huge, occasionally catastrophic disturbances. Since white noise is a continuous noise, it cannot capture sudden environmental changes. To cater to these, we also apply a discontinuous stochastic process as L\'evy jumps to model these abrupt natural phenomenon as in \cite{la2010dynamics, LevyJumps2}. 

We now perturb $r$ and $m_1$ with discontinuous L\'evy noise in addition to the continuous white noise. So, we have

\begin{equation} \label{noise}
    r \rightarrow r + \sigma_1 \mathrm{d}W_1(t) + \int_{\mathbb{Y}}^{} \gamma_1 (v) \, \widetilde{N} (\mathrm{d}t,\mathrm{d}v) ,\  -m_1 \rightarrow - m_1 + \sigma_2 \mathrm{d}W_2(t) + \int_{\mathbb{Y}}^{} \gamma_2 (v) \, \widetilde{N} (\mathrm{d}t,\mathrm{d}v)
\end{equation}

According to the L\'evy decomposition theorem \cite{oksendal2005stochastic}, we have
$\widetilde{N} (t,dv) = N(t,dv) - \lambda (dv)t$, where $\widetilde{N}(t,dv)$ is a compensated Poisson process and N is a Poisson counting measure with characteristic measure $\lambda$ on a measurable subset $\mathbb{Y}$ of $(0,+\infty)$ with $\lambda(\mathbb{Y}) < \infty$. The distribution of L\'evy jumps L$_i$(t) can be completely parameterized by $(a_i,\sigma_i,\lambda)$ and satisfies the property of infinite divisibility.

Now, by incorporating noise induced parameters (\ref{noise}) into the reduced deterministic system of equations (\ref{det3}), we get the following additional food provided stochastic prey-predator system exhibiting Holling type-IV functional response along with the environmental fluctuations captured using the white noise and L\'evy noise.

\begin{equation} \label{stoc}
\begin{split}
\mathrm{d} x(t) & = x(t) \Bigg[r \Bigg(1-\frac{x(t)}{\gamma} \Bigg)- \Bigg( \frac{y(t)}{(1+\alpha \xi)(\omega x^2(t) + 1) + x(t)}\Bigg)\Bigg] \mathrm{d}t + \sigma_1 x(t) \mathrm{d}W_1(t) + x(t) \int_{\mathbb{Y}}^{} \gamma_1 (v) \, \widetilde{N} (\mathrm{d}t,\mathrm{d}v) \\
\mathrm{d} y(t) & = y(t) \Bigg[e \Bigg( \frac{x(t) + \xi (\omega x^2(t) + 1)}{(1 + \alpha \xi)(\omega x^2(t) + 1) + x(t)} \Bigg) - m_1 - m_2 y(t) \Bigg] + \sigma_2 y(t) \mathrm{d}W_2(t) + y(t) \int_{\mathbb{Y}}^{} \gamma_2 (v) \, \widetilde{N} (\mathrm{d}t,\mathrm{d}v)
\end{split}
\end{equation}

\section{Existence of Global Positive Solution} \label{sec3}

In order to study the stochastic dynamics of (\ref{stoc}), we first prove that the system (\ref{stoc}) has a unique global positive solution.

\begin{theorem} \label{th1}
For any given initial value $X(\theta) = (x(\theta),y(\theta)) \in C([-\tau_0,0],\mathbb{R}^{+^2})$, there exists a unique positive global solution $(x(t),y(t))$ of system (\ref{stoc}) on $t \geq 0$.
\end{theorem}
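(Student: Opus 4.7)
The plan is to prove Theorem \ref{th1} by the standard Lyapunov-function-and-stopping-time argument adapted to SDEs with L\'evy jumps, as developed in \cite{oksendal2005stochastic} for jump-diffusions and applied to prey-predator systems in works such as \cite{LevyJumps2}. First I would observe that on the open positive quadrant $(0,\infty)^2$ every coefficient appearing in (\ref{stoc}) is a smooth rational function: the denominator $(1+\alpha\xi)(\omega x^2+1)+x$ is bounded below by $1+\alpha\xi>0$, and the Holling type-IV functional response $\frac{y}{(1+\alpha\xi)(\omega x^2+1)+x}$ is even bounded in $x$ for fixed $y$. Consequently the drift, the diffusion and the jump coefficients are locally Lipschitz, so the general theory gives a unique maximal local strong solution $(x(t),y(t))$ on $[0,\tau_e)$, where $\tau_e$ is the explosion time. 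The theorem therefore reduces to showing $\tau_e=\infty$ almost surely.

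To achieve non-explosion I would use the classical Khasminskii-type Lyapunov function
\[
V(x,y) = (x-1-\ln x) + (y-1-\ln y),
\]
which is non-negative on $(0,\infty)^2$ and satisfies $V(x,y)\to\infty$ whenever $x$ or $y$ approaches $0$ or $\infty$. For each integer $n$ larger than $\max(1/x(0),1/y(0),x(0),y(0))$ I would introduce the stopping time
\[
\tau_n = \inf\{\,t\in[0,\tau_e) : x(t)\notin(1/n,n)\ \text{or}\ y(t)\notin(1/n,n)\,\},
\]
so that $\tau_n\uparrow\tau_\infty\leq\tau_e$ and it suffices to show $\tau_\infty=\infty$ a.s.

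The core computation is to apply the It\^o formula for jump-diffusions to $V(x(t),y(t))$ and bound the generator. The Brownian part contributes $\tfrac12(\sigma_1^2+\sigma_2^2)$ from the quadratic variation, while each L\'evy jump term contributes the non-negative integrand $\gamma_i(v)-\ln(1+\gamma_i(v))$ against $\lambda(\mathrm{d}v)$; the theorem implicitly assumes an integrability condition on $\gamma_1,\gamma_2$ (of the form $\int_{\mathbb{Y}}[\gamma_i(v)-\ln(1+\gamma_i(v))]\,\lambda(\mathrm{d}v)<\infty$) that makes these contributions finite. The deterministic drift pieces produce $(1-1/x)\,x\,[r(1-x/\gamma)-\cdot]$ and the analogous expression in $y$; using the boundedness of the Holling IV response in $x$, the negative logistic term $-rx^2/\gamma$ in the prey, and crucially the intra-specific competition term $-m_2y^2$ in the predator, I would show that $\mathcal{L}V(x,y)\leq K$ for a constant $K$ independent of $(x,y)\in(0,\infty)^2$.

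With this bound in hand the rest is routine: taking expectations in the It\^o expansion and using that the stochastic integrals stopped at $\tau_n\wedge T$ are true martingales yields $\mathbb{E}[V(x(\tau_n\wedge T),y(\tau_n\wedge T))]\leq V(x(0),y(0))+KT$ for every $T>0$. Since $V$ blows up on the boundary of $(0,\infty)^2$ as well as at infinity, the inequality $\mathbb{P}(\tau_n\leq T)\cdot\min_{|X|\in\{1/n,n\}}V(X)\leq V(X(0))+KT$ forces $\mathbb{P}(\tau_n\leq T)\to 0$ as $n\to\infty$, hence $\tau_\infty=\infty$ a.s., which also implies $\tau_e=\infty$ a.s. and delivers the global positive solution. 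I expect the main obstacle to be the careful book-keeping in the generator computation: the Holling type-IV term is nonlinear and one must verify that its contribution to $\mathcal{L}V$ remains bounded uniformly in $x,y$, and the jump integrals must be handled via the compensator decomposition so that only non-negative, integrable residual terms survive after cancellation with the drift correction.
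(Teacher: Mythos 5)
Your proposal is correct and is exactly the argument the paper has in mind: the paper gives no proof of Theorem \ref{th1} itself, stating only that it ``can be proved in similar lines to the proof in \cite{LevyJumps2} using the Lyapunov method,'' and your local-Lipschitz-plus-explosion-time argument with the Khasminskii function $V(x,y)=(x-1-\ln x)+(y-1-\ln y)$ and stopping times $\tau_n$ is precisely that method spelled out, including the correct identification of the jump contribution $\gamma_i(v)-\ln(1+\gamma_i(v))$ and of the intra-specific competition term $-m_2y^2$ as the mechanism that keeps $\mathcal{L}V$ bounded above.
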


The above theorem for existence of solutions of (\ref{stoc}) can be proved in similar lines to the proof in \cite{LevyJumps2} using the Lyapunov method.

\section{Stochastic Time-Optimal Control Problems} \label{secOC}

In this section, we formulate and study the stochastic time-optimal control problems for prey-predator systems involving Holling type-IV functional response where the predator is provided with additional food. 

\subsection{Quality of additional food as control parameter}

In this subsection, we characterize the optimal quality of additional food for driving the system (\ref{stoc}) to a desired equilibrium state in minimum time using the stochastic maximum principle. So, we fix the quantity of additional food $\xi > 0$ to be a constant and choose the objective functional to be minimized for this stochastic time optimal control problem as follows.

\begin{equation}
 J(\alpha) = E \Bigg[ \int_{0}^{T} 1 dt \Bigg].  \label{obj}
\end{equation}  

From the Sufficient Stochastic Maximum Principle \cite{framstad2004sufficient} \nocite{oksendal2005stochastic} for the optimal control problems of jump diffusion, we characterize the optimal solution of the stochastic time optimal control problem with state space as the solutions of (\ref{stoc}) and the objective functional (\ref{obj}).

Let $(p^*,q^*,r^*)$ be a solution of the adjoint equation in the unknown processes $p(t) \in \mathbb{R}^2,\  q(t) \in \mathbb{R}^{2 \times 2},\  r(t,z) \in \mathbb{R}^2$ satisfying the backward differential equations 

\begin{equation} \label{adj}
\begin{split}
dp_1(t) &= \Bigg[ \left(-r+\frac{2rx}{\gamma} - \frac{2\omega(1+\alpha\xi)x + 1}{((1+\alpha \xi)(\omega x^2+1)+x)^2} \right) p_1(t) - \frac{(1-\omega x^2)(1+(\alpha-1)\xi)}{((1+\alpha \xi)(\omega x^2+1)+x)^2} e y p_2(t) - \sigma_1 q_1 \\&  - \int \gamma_1(v) r_1 v_1(dz_1) \Bigg] dt + q_1(t) dW_1(t) + q_2(t) dW_2(t) + \int_{\mathbb{R}^n}^{ } r_1 \widetilde{N}(dt,dz) \\
dp_2(t) &= - \Bigg[ \frac{-x}{(1+\alpha \xi)(\omega x^2 + 1)+x} p_1(t) + \left( \frac{e(x+\xi (\omega x^2 + 1))}{(1+\alpha \xi)(\omega x^2 + 1)+x}-m_1-2 m_2 y \right) p_2(t) + \sigma_2 q_4 \\& + \int \gamma_2 (v) r_2 v_2(dz_2) \Bigg] dt + q_3(t) dW_1(t) + q_4(t) dW_2(t) + \int_{\mathbb{R}^n}^{ } r_2 \widetilde{N}(dt,dz) \\
p_1(T) & = 0,  \ p_2(T) = 0
\end{split}
\end{equation}

The Hamiltonian associated with this control problem is defined as follows.

\begin{equation}
    \begin{split}
        H(t,x,y,\alpha,p,q,r) &= 1 + \Bigg[ r(1-\frac{x}{\gamma})-\frac{y}{(1+\alpha\xi)(1+\omega x^2)+x} \Bigg] xp_1 + \Bigg[\frac{e(x+\xi (\omega x^2+1))}{(1+\alpha \xi)(1+\omega x^2)+x} \\& -m_1-m_2 y\Bigg] y p_2 + \sigma_1 x q_1 + \sigma_2 y q_4 + x \int \gamma_1 r_1 v_1(dz_1) + y \int \gamma_2(v) r_2 v_2(dz_2)
    \end{split}
\end{equation}

Let $U = \{\alpha(t) | 0 \leq \alpha(t) \leq \alpha_{max} \forall t \in (0,t_f]\}$ where $\alpha_{max} \in \mathbb{R}^{+}.$ Let $\alpha^* \in U$ with the corresponding solution $(x^*,y^*) = (x(u^*),y(u^*))$. 

From the Arrow condition in the sufficient stochastic maximum Principle \cite{framstad2004sufficient}, we have 

\begin{equation*}
\begin{split}
    \frac{\partial H}{\partial \alpha} \Big|_{\alpha^*} = 0 & \\
    \implies & \Bigg[ - y x p_1 \frac{-\xi(\omega x^2 +1}{((1+\alpha \xi)(\omega x^2 + 1) + x)^2} - \frac{eyp_2(x+\xi(\omega x^2 + 1))(\xi (\omega x^2 + 1))}{((1+\alpha \xi)(\omega x^2 + 1) + x)^2}  \Bigg] \Bigg|_{*} = 0 \\
    \implies  & \Bigg[ yxp_1 - e y p_2 (x+\xi(\omega x^2 + 1)) \Bigg] \Bigg|_{*} = 0 \\
    \implies & x^* p_1^* = e p_2^* (x^* + \xi (\omega x^{*^2} + 1)) \\
\end{split}    
\end{equation*}

Hence the optimal control $\alpha^*$ should satisfy the following condition. 

\begin{equation} \label{u1}
    x^* p_1^* = e p_2^* (x^* + \xi (\omega x^{*^2} + 1))
\end{equation}
Since the analytical solution of (\ref{adj}) is complex to solve, we numerically simulate these results in section \ref{secNS}.

\subsection{Quantity of additional food as control parameter}

In this subsection, we characterize the optimal quantity of additional food for driving the system (\ref{stoc}) to a desired equilibrium state in minimum time using the stochastic maximum principle. So, we fix the quality of additional food $\alpha > 0$ to be a constant and choose the objective functional to be minimized for this stochastic time optimal control problem as follows.

\begin{equation}
 J(\xi) = E \Bigg[ \int_{0}^{T} 1 dt \Bigg].  \label{obj2}
\end{equation}

From the Sufficient Stochastic Maximum Principle \cite{framstad2004sufficient} \nocite{oksendal2005stochastic} for the optimal control problems of jump diffusion, we characterize the optimal solution of the stochastic time optimal control problem with state space as the solutions of (\ref{stoc}) and the objective functional (\ref{obj2}).

Let $(p^*,q^*,r^*)$ be a solution of the adjoint equation in the unknown processes $p(t) \in \mathbb{R}^2,\  q(t) \in \mathbb{R}^{2 \times 2},\  r(t,z) \in \mathbb{R}^2$ satisfying the backward differential equations 

\begin{equation} \label{adj2}
\begin{split}
dp_1(t) &= \Bigg[ \Big(-r+\frac{2rx}{\gamma} - \frac{2\omega(1+\alpha\xi)x + 1}{((1+\alpha \xi)(\omega x^2+1)+x)^2} \Big) p_1(t) - \frac{(1-\omega x^2)(1+(\alpha-1)\xi)}{((1+\alpha \xi)(\omega x^2+1)+x)^2} e y p_2(t) - \sigma_1 q_1 \\&  - \int \gamma_1(v) r_1 v_1(dz_1) \Bigg] dt + q_1(t) dW_1(t) + q_2(t) dW_2(t) + \int_{\mathbb{R}^n}^{ } r_1 \widetilde{N}(dt,dz) \\
dp_2(t) &= - \Bigg[ \frac{-x}{(1+\alpha \xi)(\omega x^2 + 1)+x} p_1(t) + \Big(\frac{e(x+\xi (\omega x^2 + 1))}{(1+\alpha \xi)(\omega x^2 + 1)+x}-m_1-2 m_2 y \Big)p_2(t) + \sigma_2 q_4 \\& + \int \gamma_2 (v) r_2 v_2(dz_2) \Bigg] dt + q_3(t) dW_1(t) + q_4(t) dW_2(t) + \int_{\mathbb{R}^n}^{ } r_2 \widetilde{N}(dt,dz) \\
p_1(T) & = 0,  \ p_2(T) = 0
\end{split}
\end{equation}

The Hamiltonian associated with this control problem is defined as follows.
\begin{equation}
    \begin{split}
        H(t,x,y,\xi,p,q,r) &= 1 + \Bigg[ r(1-\frac{x}{\gamma})-\frac{y}{(1+\alpha\xi)(1+\omega x^2)+x} \Bigg] xp_1 + \Bigg[\frac{e(x+\xi (\omega x^2+1))}{(1+\alpha \xi)(1+\omega x^2)+x} \\& -m_1-m_2 y\Bigg] y p_2 + \sigma_1 x q_1 + \sigma_2 y q_4 + x \int \gamma_1 r_1 v_1(dz_1) + y \int \gamma_2(v) r_2 v_2(dz_2)
    \end{split}
\end{equation}

Let $U = \{\xi(t) | 0 \leq \xi(t) \leq \xi_{max} \forall t \in (0,t_f]\}$ where $\xi_{max} \in \mathbb{R}^{+}.$ Let $\xi^* \in U$ with the corresponding solution $(x^*,y^*) = (x(\xi^*),y(\xi^*))$. 

From the Arrow condition in the sufficient stochastic maximum Principle \cite{framstad2004sufficient}, we have 
\begin{equation*}
\begin{split}
    \frac{\partial H}{\partial \xi} \Big|_{\xi^*} = 0 & \\
    \implies & \Bigg[ - y x p_1 \frac{-\alpha(\omega x^2 +1}{((1+\alpha \xi)(\omega x^2 + 1) + x)^2} + \\ & e y p_2 \left( \frac{((1+\alpha \xi) (\omega x^2 + 1) + x) (\omega x^2 + 1) - \alpha (\omega x^2 + 1) (x+\xi (\omega x^2 + 1))}{((1+\alpha \xi)(\omega x^2 + 1) + x)^2} \right) \Bigg] \Bigg|_{*} = 0 \\
    \implies  & \Bigg[ \alpha x y (\omega x^2 + 1) p_1 + e y p_2 (\omega x^2 + 1) (x(1-\alpha) (\omega x^2 + 1)) \Bigg] \Bigg|_{*} = 0 \\
    \implies & \Bigg[ \alpha x p_1 + e p_2 (1 + \omega x^2 + x (1 -\alpha))\Bigg] \Bigg|_{*} = 0 \\ 
    \implies & \alpha x^* p_1^* + e p_2^* (1 + \omega x^{*^2} + x^* (1-\alpha)) = 0 \\
\end{split}    
\end{equation*}

Hence the optimal control $\xi^*$ should satisfy the following condition. 

\begin{equation} \label{u2}
    \alpha x^* p_1^* + e p_2^* (1 + \omega x^{*^2} + x^* (1-\alpha)) = 0
\end{equation}

Since the analytical solution of (\ref{adj}) is complex to solve, we numerically simulate these results in section \ref{secNS}.

\subsection{Existence and Uniqueness of Solutions for the Forward Backward Stochastic Differential Equations with Jumps (FBSDEJ)}

We so far obtained the adjoint equations (\ref{adj}),(\ref{adj2}) for the state equations (\ref{stoc}) and the objective functional (\ref{obj}), (\ref{obj2}) using the sufficient stochastic maximum principle respectively. Upon simplifying the results obtained from the arrow condition (\ref{u1}), (\ref{u2}) from earlier two subsections, we see that the optimal controls are given by

\begin{equation} \label{u}
        \alpha^* = \frac{ep_2^* (1+x^*+\omega x^{*^2})}{e p_2^* x^*-p_1^* x^*}, \  \xi^* = \frac{x^* p_1^* - e x^* p_2^*}{e p_2^* (1+\omega x^{*^2})}
\end{equation}

In this section, we now prove the existence of optimal controls by proving the existence of the solutions for the FBSDEJ ((\ref{stoc}),(\ref{adj}),(\ref{adj2})) which establishes the existence of $(x^*,y^*,p_1^*,p_2^*)$ for all simulation purposes. Using the theorem in \cite{JumpExistence}, we now prove the existence of the optimal controls (\ref{u}) in the following theorem.
\begin{theorem} \label{th2}
For any $(x_0,y_0) \in \mathbb{R}^{+^2}$, the FBSDEJ ((\ref{stoc}),(\ref{adj}),(\ref{adj2})) admits an optimal stochastic control.
\end{theorem}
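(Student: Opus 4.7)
The plan is to apply the existence theorem for fully coupled forward backward stochastic differential equations with jumps cited as \cite{JumpExistence}; that theorem reduces the problem to verifying a standard package of measurability, Lipschitz continuity, linear growth, and monotonicity (G-monotonicity) hypotheses on the drift, diffusion, and jump coefficients appearing in the coupled system ((\ref{stoc}),(\ref{adj}),(\ref{adj2})). So the entire proof should be organized as a verification of these hypotheses, after which the cited theorem yields the existence of an adapted quadruple $(x^*, y^*, p_1^*, p_2^*)$, and by the characterization (\ref{u}) this yields the existence of the optimal controls $\alpha^*$ and $\xi^*$.

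The key steps I would carry out in order are as follows. First, invoke Theorem \ref{th1} to guarantee that the forward component admits a unique global positive solution, which restricts the state trajectory to the positive cone $\mathbb{R}^{+^2}$; this is essential because the rational terms such as $\frac{y}{(1+\alpha\xi)(\omega x^2+1)+x}$ and $\frac{e(x+\xi(\omega x^2+1))}{(1+\alpha\xi)(\omega x^2+1)+x}$ have denominators bounded below by $1$ uniformly on the solution path (since $(1+\alpha\xi)(\omega x^2+1)+x \geq 1$ when $x,y \geq 0$). Second, verify measurability of all coefficient functions in $(t,x,y,p_1,p_2,q,r)$, which is immediate since they are continuous in the state variables and deterministic in $t$. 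Third, establish the local Lipschitz condition: on the positive cone, the partial derivatives of the rational expressions appearing in both the forward drift and the adjoint drifts (including factors like $\frac{2\omega(1+\alpha\xi)x+1}{((1+\alpha\xi)(\omega x^2+1)+x)^2}$ and $\frac{(1-\omega x^2)(1+(\alpha-1)\xi)}{((1+\alpha\xi)(\omega x^2+1)+x)^2}$) are bounded on bounded subsets, giving Lipschitz constants that depend only on local bounds of $x$ and $y$. Fourth, verify the linear growth condition in the backward adjoint variables $p_1, p_2, q, r$, which is clear since the adjoint drifts are linear in $(p,q,r)$ with coefficients that are bounded functions of the forward state on the positive cone. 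Fifth, combining the integrability of the forward solution obtained from Theorem \ref{th1} (which in particular gives moment estimates on $x$ and $y$) with the linearity of the adjoint in $(p,q,r)$, derive the requisite global integrability bounds so that the hypotheses of \cite{JumpExistence} are met.

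The main obstacle will be the monotonicity (G-monotonicity) condition that couples the forward and backward equations in the FBSDEJ existence framework, which is typically the most delicate hypothesis to verify for nonlinear systems. To handle it, I would exploit the structural property that the forward drift does not depend on $(p,q,r)$ and the backward drift depends only linearly on $(p,q,r)$; this decoupling simplifies the monotonicity pairing $\langle G(A_1-A_2), A_1-A_2\rangle$ to a sign-definite expression that can be bounded using the local Lipschitz bounds derived in step three together with a suitable choice of the weighting matrix $G$. A secondary difficulty is that the Lipschitz constants are not globally uniform but depend on the size of the forward state; this is overcome by a localization argument in which the state is truncated at level $n$, the existence theorem is applied to the truncated FBSDEJ, and then the solutions are shown to be consistent across $n$ by uniqueness and the a priori moment bounds from Theorem \ref{th1}, yielding an adapted solution on the whole horizon $[0,T]$. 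Finally, substituting this solution into (\ref{u}) produces the admissible optimal controls $\alpha^* \in U$ and $\xi^* \in U$, completing the proof.
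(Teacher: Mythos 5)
Your proposal takes the same overall route as the paper: reduce the statement to the existence theorem for fully coupled FBSDEJ in \cite{JumpExistence} and verify its hypotheses (Lipschitz continuity, growth, monotonicity) for the forward coefficients of (\ref{stoc}) and the backward coefficients of (\ref{adj}), (\ref{adj2}), then read off the optimal controls from (\ref{u}). The difference is one of rigor rather than strategy. The paper's proof is very terse: it asserts that Theorem \ref{th1} ``guarantees'' the monotonicity and Lipschitz continuity of the forward coefficients (whereas Theorem \ref{th1} only gives existence and positivity of the solution --- the role you correctly assign to it, namely bounding the denominators $(1+\alpha\xi)(\omega x^2+1)+x \geq 1$ away from zero), and it concludes Lipschitz continuity of the adjoint drift from its linearity in $(p_1,p_2)$ while ignoring that the coefficients $\tfrac{2rx}{\gamma}$, $\omega e x^2 y$, $m_1+2m_2y$ are unbounded on $\mathbb{R}^{+^2}$, so the resulting Lipschitz constants are only local. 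Your steps three through five and the truncation-plus-moment-bounds argument address exactly this gap, and your explicit treatment of the G-monotonicity pairing is absent from the paper altogether (the paper simply declares monotonicity ``guaranteed''). In short, your proposal is a more complete version of the paper's argument; to finish it you would still need to carry out the sign-definiteness computation for the monotonicity pairing and confirm that the moment estimates implicit in Theorem \ref{th1} are strong enough to pass to the limit in the localization, neither of which the paper does either.
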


\begin{proof}
Let $(X_t)_{t \geq 0}$ be the solution of the Stochastic Differential Equation with Jumps(SDEJ) 
\begin{equation*}
    \mathrm{d}X_t = b(X_t) \mathrm{d}t + \sigma(X_t) \mathrm{d}W(t) + \int_{\mathbb{R}}^{ } \Gamma(v) \widetilde{N} (\mathrm{d}t, \mathrm{d}v)
\end{equation*}

Here the term $b(X_t)$ denotes the drift coefficient, the term $\sigma(X_t)$ denotes the diffusion coefficient and the term $\Gamma(v)$ denotes the poisson term coefficient.

The theorem \ref{th1} in section \ref{sec3} guaranties the monotonicity and Lipschitz continuity of the drift coefficient, the diffusion coefficient and the poisson term coefficient of the state equations (\ref{stoc}).

Following the the existence and uniqueness theorem of FBDSDEJ in \cite{JumpExistence}, we are only left to prove the monotonicity and Lipschitz continuity of the drift and diffusion terms of the adjoint system of equations (\ref{adj}). 

From (\ref{adj}), due to the positivity of state variables guaranteed by theorem \ref{th1}, the drift term and the diffusion terms are given as follows.
\begin{equation*}
    b(X_t) \leq \begin{pmatrix}  \left( \frac{2rx}{\gamma} \right) p_1(t) + \left( \omega e x^2 y  \right) p_2(t) \\ (x) p_1(t) + (m_1 + 2 m_2 y) p_2(t) \end{pmatrix}, \ \sigma(X_t) = \begin{pmatrix}   q_1(t) & q_2(t) \\ q_3(t) & q_4(t) \end{pmatrix}
\end{equation*}

Since the drift coefficient is a linear combination of adjoint terms ($p_1, \ p_2$), the monotonicity and Lipschitz continuity are guaranteed. 

In addition to this, the diffusion coefficient is independent of the adjoint terms $(p_1, \ p_2)$. Therefore, the monotonicity and Lipschitz continuity are guarenteed for the diffusion coefficients.

Hence the existence of unique stochastic optimal controls are proved for FBSDEJ ((\ref{stoc}),(\ref{adj}),(\ref{adj2})).
\end{proof}

\section{Numerical Simulations} \label{secNS}

In this section, we perform the extensive numerical simulations using python by choosing the following parameters \cite{LevyJumps2} for the model (\ref{stoc}). $r=1.5,\ \gamma = 12, \omega = 15, \ e =0.4, m_1 = 0.15, \ m_2 = 0.01,\  \sigma_1 = \sigma_2 = 0.02, \ \gamma_1 = 1, \gamma_2 = 1$. In these simulations, white noise is simulated using the Box-Normal transformations and the poisson noise is simulated using the poisson point processes \cite{NSJumps}. The state equations (\ref{stoc}) and the adjoint equations (\ref{adj}),(\ref{adj2}) are simulated using the Forward Backward Doubly Stochastic Differential Equations with Jumps (FBDSDEJ) method. The subplots in figures \ref{fig1}, \ref{fig2} depict the optimal state trajectories, optimal co-state trajectories, phase diagram, optimal quality of additional food and the optimal quantity of additional food respectively.
\paragraph{\qquad \underline{A. Applications to Biological Conservation:}}

The subplot (\ref{11}) depicts the optimal state trajectory of the system (\ref{stoc}) from the initial state $(2,8)$ that stabilizes over time around the state $(16,90)$. The subplot (\ref{13}) gives the phase diagram which shows the trajectories are stabilized over high values of prey and predator. The subplots (\ref{14}) and (\ref{15}) depicts the optimal quality and quantity of additional food respectively. These plots show that the high quality of additional food is required to achieve biological conservation. Even if the quantity of additional food is lower, still we will be able to achieve biological conservation with higher quality of additional food.
\begin{figure} 
    \begin{subfigure}{0.45\textwidth}
        \includegraphics[width=\textwidth]{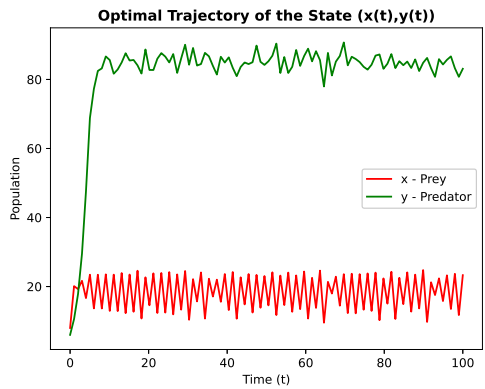}
        \caption{ }
        \label{11}
    \end{subfigure}
    \begin{subfigure}{0.45\textwidth}
        \includegraphics[width=\textwidth]{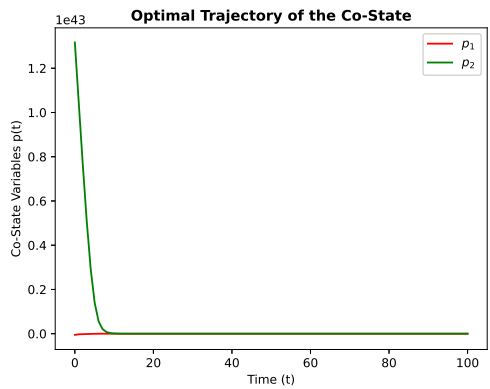}
        \caption{ }
        \label{12}
    \end{subfigure}

    \begin{center}        
    \begin{subfigure}{0.45\textwidth}
        \includegraphics[width=\textwidth]{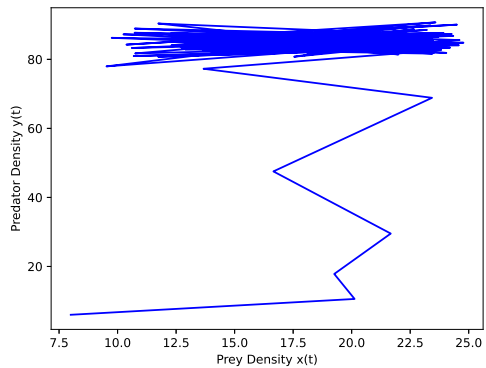}
        \caption{ }
        \label{13}
    \end{subfigure}
    \end{center}

     \begin{subfigure}{0.45\textwidth}
        \includegraphics[width=\textwidth]{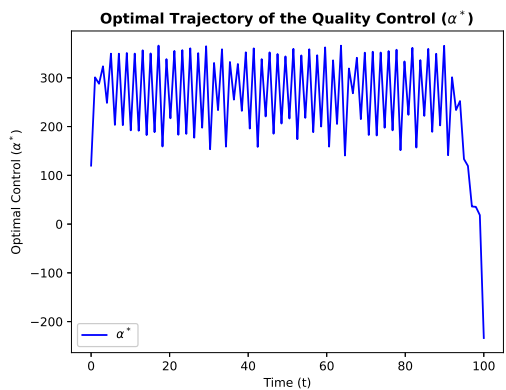}
        \caption{ }
        \label{14}
    \end{subfigure}
    \begin{subfigure}{0.45\textwidth}
        \includegraphics[width=\textwidth]{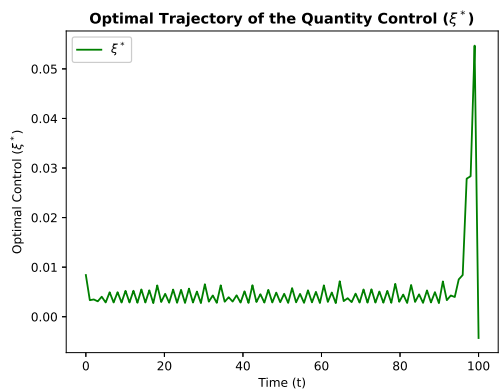}
        \caption{ }
        \label{15}
    \end{subfigure}
    \caption{This ﬁgure depicts the simulations of time-optimal control problem with respect to the control parameters in the context of biological conservation.}
    \label{fig1}
\end{figure}

\paragraph{\qquad \underline{B. Applications to Pest Management:}}
The subplot (\ref{21}) depicts the optimal state trajectory of the system (\ref{stoc}) from the initial state $(2,8)$ that reaches the nearly prey-elimination stage around the state $(5,90)$. The subplot (\ref{23}) depicts this property more clearly through the phase diagram where it reaches the lowest prey value over the time. The subplots (\ref{24}) and (\ref{25}) depicts that a lesser quality of additional food and a lower quantity of additional food is good enough to achieve pest management where pest is viewed as prey.

\begin{figure} 
    \begin{subfigure}{0.45\textwidth}
        \includegraphics[width=\textwidth]{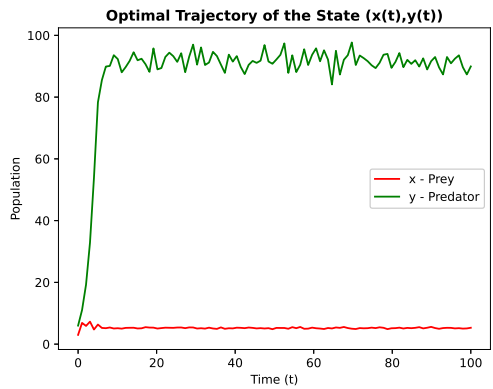}
        \caption{ }
        \label{21}
    \end{subfigure}
    \begin{subfigure}{0.45\textwidth}
        \includegraphics[width=\textwidth]{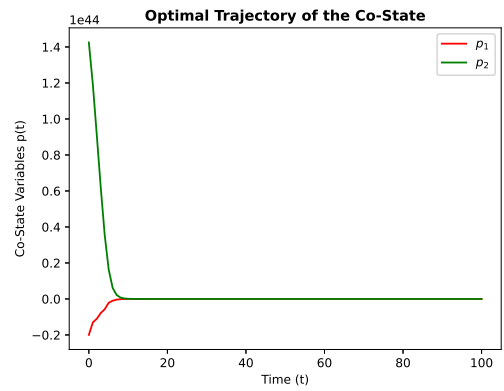}
        \caption{ }
        \label{22}
    \end{subfigure}

    \begin{center}        
    \begin{subfigure}{0.45\textwidth}
        \includegraphics[width=\textwidth]{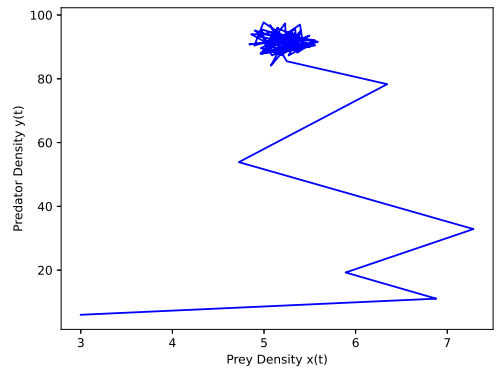}
        \caption{ }
        \label{23}
    \end{subfigure}
    \end{center}

     \begin{subfigure}{0.45\textwidth}
        \includegraphics[width=\textwidth]{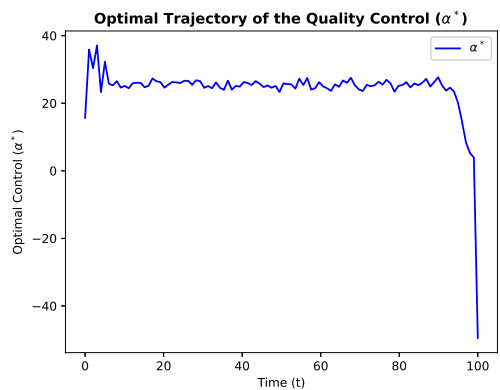}
        \caption{ }
        \label{24}
    \end{subfigure}
    \begin{subfigure}{0.45\textwidth}
        \includegraphics[width=\textwidth]{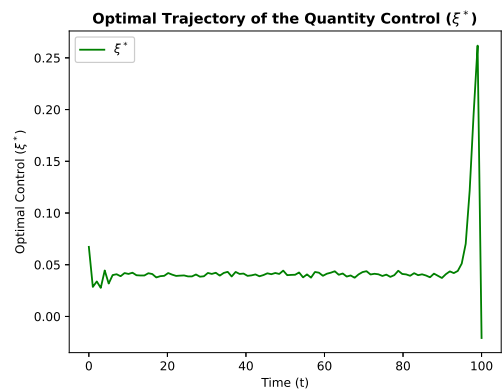}
        \caption{ }
        \label{25}
    \end{subfigure}
    \caption{This ﬁgure depicts the simulations of time-optimal control problem with respect to the control parameters in the context of pest management.}
    \label{fig2}
\end{figure}

\newpage

\section{Conclusion} \label{secD}

This paper is about a stochastic prey-predator system exhibiting Holling type IV functional response along with the combined white noise and L\'evy noise. We do the time-optimal control studies for the system, where the quality of additional food and the quantity of additional food are considered as control parameters respectively. To begin with, we extend the model by incorporating multiplicative noise to both prey and predator. In theorem \ref{th1}, we proved the existence of a unique positive global solution of (\ref{stoc}). Further, we formulated the time-optimal control problem with the objective to minimize the final time in which the system reaches the pre-defined state. Using the sufficient stochastic maximum principle, we characterized the optimal control values. In theorem \ref{th2}, we proved that the existence and uniqueness of Forward Backward Doubly Stochastic Differential Equations with Jumps (FBDSDEJ). Lastly, we numerically simulated the theoretical findings and applied them in the context of biological conservation and pest management. 

Some of the salient features of this work include the following. Unlike the most traditional papers, here we considered a stochastic time-optimal control problem. As Intra-specific competition among predators is ineluctable, we also explicitly incorporated the intra-specific competition into our model. This paper mainly deals with the novel study of the time-optimal control problems where the state equations involve both the discrete and continuous noise which is challenging. In future research, we hope to add and study more specificalities such as mutual interference, stochastic bifurcations, Markov chain and partial differential systems. 

\subsection*{Financial Support: }
This research was supported by National Board of Higher Mathematics(NBHM), Government of India(GoI) under project grant - {\bf{Time Optimal Control and Bifurcation Analysis of Coupled Nonlinear Dynamical Systems with Applications to Pest Management, \\ Sanction number: (02011/11/2021NBHM(R.P)/R$\&$D II/10074).}}

\subsection*{Conflict of Interests Statement: }
The authors have no conflicts of interest to disclose.

\subsection*{Ethics Statement:} 
This research did not required ethical approval.

\subsection*{Acknowledgments}
The authors dedicate this paper to the founder chancellor of SSSIHL, Bhagawan Sri Sathya Sai Baba. The corresponding author also dedicates this paper to his loving elder brother D. A. C. Prakash who still lives in his heart.

\printbibliography

\end{document}